\documentclass{article}
\usepackage{pgfplots}
\pgfplotsset{compat=1.9}

\usepackage[T2A]{fontenc}
\usepackage[cp1251]{inputenc}
\usepackage[english]{babel}
\usepackage[tbtags]{amsmath}
\usepackage{amsthm}
\usepackage{amsfonts,amssymb}

\usepackage{graphicx}

\newtheorem{Oldtheorem}{Theorem}

\newtheorem{Example}{Theorem}
\newtheorem{Conjecture}{Conjecture}

\theoremstyle{definition}
\newtheorem{Remark}{Remark}

\overfullrule5pt
\begin{document}


\date{}

\author{I.V.~Limonova\thanks{Lomonosov Moscow State University, Moscow Center for Fundamental and Applied Mathematics.}}


\title{On exact discretization of the $L_2$--norm with a negative weight.}

\maketitle

\begin{abstract}
	For a subspace $X$ of functions from $L_2$ we consider the minimal number $m$ of nodes necessary for the exact discretization of the $L_2$--norm of the functions in $X$. We construct a subspace such that for any exact discretization with $m$ nodes there is at least one  negative weight.
\end{abstract}

\small{{\it Key words and phrases:} exact discretization, sampling discretization, weighted discretization.}

\bigskip

The paper is devoted to the problem posed in \cite{DPTT} on the exact discretization of the Marcinkiewich--type with weights.

Let $\Omega\subset \mathbb{R}^d$ be a compact subset, $\mu$ be a finite measure on $\Omega$, and $X_N$ be an $N$--dimensional subspace of the real space $L_2(\Omega, \mu)$ of individual functions.  From now on we assume that for any function $f\in X_N$ the value $f(x)$ is defined for any $x\in\Omega$. That is $X_N=\langle f_1,\dots, f_N \rangle$  (here and below the angle brackets denote the linear hull), where the functions $f_1,\dots, f_N\in L_2(\Omega, \mu)$ are linearly independent and defined everywhere on $\Omega$.

We say that a linear subspace $X_N$ admits {\bf the exact weighted Marcinkiewich-type discretization theorem} with parameters $m$ and $2$ if there exist a set of nodes $\{\xi^j\}_{j=1}^m\subset\Omega$ and a set of weights $\{\lambda_j\}_{j=1}^m$ such that for any $f\in X_N$ we have 
\begin{equation}\label{discr}
	\int\limits_{\Omega}f^2 d\mu=\sum\limits_{j=1}^m \lambda_{j}f^2(\xi^{j}).
\end{equation}
Here is a brief way to express the above property: $X_N\in\mathcal{M}^w(m,2,0)$. If a subspace $X_N$ admits equality \eqref{discr} with positive weights, we write $X_N\in\mathcal{M}_+^w(m,2,0)$.

It is known that $X_N\in \mathcal{M}^w(N(N+1)/2,2,0)$ (see \cite[Theorem $3.1$]{DPTT}). For a subspace $X_N\subset L_2(\Omega, \mu)$ we define
$$
m(X_N, w):=\min\{m: X_N\in\mathcal{M}^w(m,2,0)\}.
$$
The following conjecture was formulated in \cite{DPTT}: 
\begin{Conjecture}[{\cite[Open Problem $3$]{DPTT}}]\label{C1}
	Let $m=m(X_N, w)$ and  the nodes $\{\xi^{j}\}_{j=1}^m\subset\Omega$ and the weights $\{\lambda_{j}\}_{j=1}^m$ be such that \eqref{discr} holds for any $f\in X_N$. Then  $\lambda_{j}>0$, $j=1,\dots, m$.
\end{Conjecture}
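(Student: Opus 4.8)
The plan is to \emph{disprove} Conjecture~\ref{C1}. I would produce a concrete subspace $X_N$ for which the minimal node count $m:=m(X_N,w)$ is attained but \emph{no} $m$-node formula \eqref{discr} has all positive weights, so that every minimal exact discretization must contain a negative weight. The first step is to recast \eqref{discr} linear-algebraically: with $v(\xi):=(f_1(\xi),\dots,f_N(\xi))\in\mathbb R^N$ and $G:=\bigl(\int_\Omega f_if_k\,d\mu\bigr)_{i,k}=\int_\Omega v(\xi)v(\xi)^{\top}\,d\mu(\xi)$, the Gram matrix $G$ is symmetric positive definite (the $f_i$ being $L_2$-independent), and \eqref{discr} with nodes $\xi^1,\dots,\xi^m$ and weights $\lambda_j$ is precisely $G=\sum_{j=1}^m\lambda_j\,v(\xi^j)v(\xi^j)^{\top}$. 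So $m(X_N,w)$ is the least number of rank-one matrices from $\mathcal V:=\{v(\xi)v(\xi)^{\top}:\xi\in\Omega\}\subset\operatorname{Sym}(\mathbb R^N)$ whose real linear span contains $G$, and the analogous quantity $m_+(X_N)$ for \emph{nonnegative} combinations (the node count of the best all-positive formula) is finite by Tchakaloff's theorem and satisfies $m_+(X_N)\ge m(X_N,w)$.

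Two elementary reductions: the right-hand side has rank $\le m$ while $\operatorname{rank}G=N$, so $m(X_N,w)\ge N$; and if $m(X_N,w)=N$, then with $V=[\,v(\xi^1)\mid\cdots\mid v(\xi^N)\,]$ invertible one gets $\operatorname{diag}(\lambda_1,\dots,\lambda_N)=V^{-1}GV^{-\top}$, congruent to the positive-definite $G$, hence positive definite, so all $\lambda_j>0$. Thus a counterexample must have $m(X_N,w)\ge N+1$, and it suffices to arrange $m_+(X_N)>m(X_N,w)$; I would aim at the sharpest case $m(X_N,w)=N+1<m_+(X_N)$. (This already forces $N\ge 3$: when $N\le 2$, $\dim\operatorname{Sym}(\mathbb R^N)\le 3$, so Tchakaloff gives an all-positive formula with $\le 3\le N+1$ nodes, precluding a counterexample.)

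For the construction I would choose $\Omega$ (likely finite, with only a few points), $\mu$ and $f_1,\dots,f_N$ so that: (a) the moment cone $\operatorname{cone}(\mathcal V)$ has a ``large'' proper face $F$ in whose relative interior $G$ lies, $\mu$ being carried by the corresponding subset $\Omega_0\subset\Omega$; (b) there exist $N+1$ nodes $\xi^1,\dots,\xi^{N+1}\in\Omega$ (one of them off $F$) and weights with $G=\sum_{j\le N+1}\lambda_j v(\xi^j)v(\xi^j)^{\top}$ in which some $\lambda_j<0$ --- the $v(\xi^j)$ being linearly dependent and the node off $F$ forcing the cancellation; (c) no $N$ nodes of $\Omega$ reproduce $G$ (a determinant computation). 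Items (b)--(c) yield $m(X_N,w)=N+1$ together with an explicit negative-weight formula.

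The step I expect to be the main obstacle is (a) turned into a proof that \emph{every} all-positive $(N+1)$-node formula is impossible. Here I would use a supporting functional: exhibit $\ell$ on $\operatorname{Sym}(\mathbb R^N)$ with $\ell(v(\xi)v(\xi)^{\top})\ge 0$ for all $\xi\in\Omega$, vanishing exactly on $\Omega_0$, and $\ell(G)=0$ (so $\mu$ must be supported on $\Omega_0$). In any nonnegative representation of $G$ all active nodes then lie in $\Omega_0$, and a dimension count on $\operatorname{span}\{v(\xi)v(\xi)^{\top}:\xi\in\Omega_0\}$ --- together with linear independence of those rank-one matrices --- shows $N+1$ of them cannot reach $G$ with positive coefficients, i.e.\ $m_+(X_N)\ge N+2$. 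The genuine difficulty is to make (a)--(c) hold at once: $\mathcal V$ must be rich enough to admit the signed $(N+1)$-node formula, the sub-configuration over $\Omega_0$ too poor to admit a positive one, and $G$ exactly on the supporting hyperplane; the rest (the reformulation, the congruence argument, writing down one signed formula) is bookkeeping. Finally, note that exhibiting even a single $m$-node formula with a negative weight already refutes Conjecture~\ref{C1}, which asserts that \emph{all} minimal formulas are positive; the stronger non-existence statement above is what the abstract promises and what this plan targets.
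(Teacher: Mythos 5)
Your linear--algebraic reformulation is correct and genuinely useful: writing \eqref{discr} as $G=\sum_{j}\lambda_j\,v(\xi^j)v(\xi^j)^{\top}$ with $G$ the (positive definite) Gram matrix, the rank bound $m(X_N,w)\ge N$ and the congruence argument showing that any exact $N$-node formula automatically has all weights positive are both right, so a counterexample must indeed have $m(X_N,w)\ge N+1$. But what you have written is a programme, not a proof. Refuting Conjecture~\ref{C1} consists precisely of exhibiting a concrete $\Omega$, $\mu$, $f_1,\dots,f_N$ for which your conditions (b)--(c) (and, for the stronger non-existence claim, (a)) actually hold, and you stop exactly where you acknowledge that making (a)--(c) hold simultaneously is ``the genuine difficulty''. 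No subspace is produced, no determinant computation is carried out, no supporting functional $\ell$ is exhibited, so nothing is disproved. The entire content of the paper is this missing construction: Example~1 takes $N=2$ on $\Omega=[-1,1]$ with two explicit piecewise constant functions, proves $m(X_2,w)=3$ by an exhaustive analysis of all one- and two-node formulas, and writes down a $3$-node formula with $\lambda_1=(2a^2-A^2+B^2)/(2a^2)<0$; Example~2 builds eight continuous functions with a nested support structure (your ``supporting functional'' idea appears there in concrete form, via the orthogonality relations \eqref{for_pair} and the function $h_1$ whose squared integral is $3/4$ while the weights on its level set must sum to $1$) and shows every $9$-node formula has a negative weight.

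A second, more localized error: your parenthetical claim that a counterexample ``forces $N\ge 3$'' is true only for the stronger statement $m_+(X_N)>m(X_N,w)$, not for Conjecture~\ref{C1} itself. The conjecture asserts that \emph{every} minimal formula has positive weights, so it already fails for a subspace in which a positive minimal formula and a signed minimal formula coexist, i.e.\ with $m_+=m$. That is exactly what the paper's Example~1 achieves with $N=2$ and $m=3$: one $3$-node formula with all weights positive and another with a negative weight. You note this loophole yourself in your final sentence, but your plan then bypasses the much cheaper two-dimensional route, which is where a complete and easily verifiable counterexample actually lives; the facial/moment-cone machinery is only needed for the stronger claim $X\in\mathcal{M}^w(m,2,0)\setminus\mathcal{M}^w_+(m,2,0)$, and even there the hard part --- the construction --- remains entirely undone in your proposal.
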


In Example 1 below we show that Conjecture \ref{C1} is not true. Note that  \cite{DPTT} deals with the probability space $(\Omega,\mu)$. But we can always come to this case by replacing $\mu$ with $\mu/\mu{(\Omega)}$, $\lambda_{j}$ with $\lambda_{j}/\mu{(\Omega)}$, $j=1,\dots, m$ (equality \eqref{discr} will remain true).

Let $\Omega=[-1,1]$, $\mu$ be the standard Lebesgue measure. 
Let $a>0$, $A>B>0$ be such that
\begin{equation}\label{cond_a}
A^2> 2a^2+B^2.
\end{equation}
For example, \eqref{cond_a} holds with $A=3$, $B=2$, $a=1$.

We define the functions $f_1$ and $f_2$ as follows:
\begin{equation*}
    f_1(x) = 
	\begin{cases}
	0, &\text{for $-1\leq x< 0$},\\
	1, &\text{for $0\le x\le 1$},
    \end{cases}
    \qquad
f_2(x) = 
	\begin{cases}
	 a, &\text{for $-1\leq x< 0$},\\
	 A, &\text{for $0\le x<1/4$},\\
	-A, &\text{for $1/4\le x<1/2$},\\
	 B,& \text{for $1/2\le x<3/4$},\\
	-B,& \text{for $3/4\le x\le 1$}.\\
	\end{cases}
\end{equation*}
Then 
\begin{align}\label{int_values_ex1}
	\int\limits_{\Omega}f_1^2(x) d\mu=1, \quad \int\limits_{\Omega}f_2^2(x) d\mu=a^2+\frac{A^2}{2}+\frac{B^2}{2}.
\end{align}
Note that
\begin{equation}\label{f_orth}
\int\limits_{\Omega}f_1(x)f_2(x) d\mu=0.
\end{equation}

\begin{Example}[Example 1]\label{ex1}
The subspace $X_2=\langle f_1, f_2\rangle\subset L_2 (\Omega)$ has the following properties:
	\begin{itemize}
		\item[a)] $m(X_2, w)=3$,
		\item[b)] $X_2\in\mathcal{M}_+^w(3,2,0)$,
		\item[c)] for $\xi^1=-1/2$, $\xi^2=1/8$, $\xi^3=3/8$, $\lambda_1=(2a^2-A^2+B^2)/(2a^2)<0$, $\lambda_2=\lambda_3=1/2$, there holds \eqref{discr}  for all $f\in X_2$.
	\end{itemize} 
\end{Example}
\begin{Remark}\label{remark_f_g} 
It follows from {\it a)} and {\it c)}  that  Conjecture \ref{C1} is not true. 
\end{Remark}

\begin{proof}
Any function $h$ in $X_2$ has a representation $h=\alpha f_1+\beta f_2$, $\alpha,\beta\in\mathbb{R}$. From here and from \eqref{f_orth} we obtain that in order to have
	\begin{equation}\label{int_eq_k}
		\int\limits_{\Omega}f^2(x) d\mu=\sum\limits_{j=1}^k \lambda_jf^2(\xi^j),
	\end{equation}
for some $k\in\mathbb{N}$  for all functions $f\in X_2$ it is nessessary and sufficient that \eqref{int_eq_k} is satisfied for $f_1$ and $f_2$ and the following equality holds:
	\begin{equation}\label{discr_orth_0}
		\sum\limits_{j=1}^k \lambda_jf_1(\xi^j)f_2(\xi^j)=0.
	\end{equation}

It is clear that $X_2\notin\mathcal{M}^w(1,2,0)$, otherwise we can find $\xi^1\in\Omega$, $\lambda_1\in\mathbb{R}$ such that for $k=1$ equality \eqref{int_eq_k} holds for all $f\in X_2$, in particular, for $f_1$.  It follows from \eqref{int_values_ex1} that in this case $f_1(\xi^1)=1$, $\lambda_1=1$. Therefore from \eqref{discr_orth_0}  we obtain $f_2(\xi^1)=0$, which contradicts equality  \eqref{int_eq_k} for $f_2$.

Assume that $X_2\in\mathcal{M}^w(2,2,0)$, then \eqref{int_eq_k} holds with $k=2$ for some $\xi^1, \xi^2\in\Omega$, $\lambda_1, \lambda_2\in\mathbb{R}$. Let us consider all possible cases. 

If $f_1(\xi^1)=0$, then  we obtain from \eqref{discr_orth_0} that $\lambda_2f_1(\xi^2)f_2(\xi^2)=0$. If we have at the same time $f_1(\xi^2)=0$, then \eqref{int_eq_k} does not hold for $f_1$, so in this case $f_1(\xi^2)=1$. By construction, $f_2(\xi^2)\neq 0$, hence, $\lambda_2=0$ and $X\in\mathcal{M}^w(1,2,0)$, which is not true. 

Thus, $f_1(\xi^1)=1$. Similarly, $f_1(\xi^2)=1$. Then \eqref{int_eq_k} for $f_1$  takes the following form:
\begin{gather}	
\lambda_1+\lambda_2=1. \label{f_1}
\end{gather}
We see from \eqref{discr_orth_0} that 
\begin{equation}
	\lambda_1f_2(\xi^1)+\lambda_2f_2(\xi^2)=0. \label{orth}
\end{equation}

If $f_2(\xi^1)=f_2(\xi^2)$, then \eqref{orth} implies the equality $\lambda_1+\lambda_2=0$, which contradicts \eqref{f_1}. If $f_2(\xi^1)=-f_2(\xi^2)$, then we obtain from  \eqref{f_1} and \eqref{orth} that $\lambda_1=\lambda_2=1/2$. Then due to \eqref{cond_a} equality \eqref{int_eq_k} does not hold for $f_2$. The only case left is $|f_2(\xi^1)|\neq|f_2(\xi^2)|$. Without loss of generality we assume that $|f_2(\xi^1)|>|f_2(\xi^2)|$. 

In this case, \eqref{orth} implies $|\lambda_1|<|\lambda_2|$. We see from \eqref{int_eq_k} for $f_2$ and from relations \eqref{int_values_ex1} and \eqref{f_1} that
$$
\lambda_1A^2+(1-\lambda_1) B^2=a^2+\frac{B^2}{2}+\frac{A^2}{2}>\frac{A^2}{2}+\frac{B^2}{2}.
$$
Due to the monotonicity of the left--hand side of the above relation with respect to $\lambda_1$, we obtain $\lambda_1>1/2$. Then it follows from \eqref{f_1} that  $|\lambda_1|>|\lambda_2|$, so we came to a contradiction. Thus we established $m(X_2, w)>2$.

In order to prove c), it is sufficient to represent $f$ as a linear combination of $f_1$ and $f_2$ and put the values of the parameters in \eqref{discr}.

To establish b) we can take $\xi^1=-1/2$, $\xi^2=5/8$, $\xi^3=7/8$, $\lambda_1=(2a^2+A^2-B^2)/(2a^2)>0$, $\lambda_2=\lambda_3=1/2$ and do the same as in c). Now a) follows from the proved inequality $m(X_2, w)>2$ and b).

\end{proof}

The subspace $X_2$ from Example 1 has the following interesting property. There exist representations of the form \eqref{discr} (with $m=m(X_2, w)$) of both kinds: with all positive weights  and with one negative and two positive ones. It is natural to ask whether there exists a subspace $X$ such that  
$X\in \mathcal{M}^w(m,2,0) \backslash \mathcal{M}_+^w(m,2,0)$, where $m=m(X, w)$. We note that the functions in Example 1 are piecewise constant but could a similar property be true for a subspace consisting of continuous functions? A positive answer to both of these questions is given in Example 2 below.

Let us define functions $h_0, h_1,\dots, h_7$ which are continuos on $\Omega$. Let us divide them into four levels. The zero level consists of the function $h_0$, the first one --- of $h_1$, the second --- of $h_2$ and $h_3$, the third --- of $h_4$, $h_5$, $h_6$, and $h_7$. We introduce auxiliary continuos functions $g_{[a,b]}(x)$, $-1\leq a<b\leq 1$, on $\Omega$ as follows: denote $l=(b-a)/8$,
\begin{align*}
	g_{[a,b]}(x) = 
	\begin{cases}
	(x-a)/l, &\text{for $a\leq x<a+l$,}\\
	1, &\text{for $a+l\leq x<a+3l$,}\\
	(-x+a+4l)/l, &\text{for $a+3l\leq x< a+5l$,}\\\
	-1, &\text{for $a+5l\leq x<a+7l$,}\\
	(x-b)/l, &\text{for $a+7l\leq x\leq b$,}\\
	0, &\text{otherwise.}
	\end{cases}
\end{align*}

Define

\begin{minipage}{0.5\textwidth}
	\begin{align*}
		h_0(x)& = 
		\begin{cases}
		0, &\text{for $x\in[-1,0)$,}\\
		\sqrt{24x}, &\text{for $x\in[0,1/16)$,}\\
		\sqrt{-8x+2}, &\text{for $x\in[1/16,1/8)$,}\\
		1, &\text{for $x\in[1/8,1]$,}\\
		\end{cases}\\
		h_1(x)&=
		\begin{cases}
			-x/2, &\text{for $x\in[-1,0)$,}\\
			g_{[0,1]}(x), &\text{otherwise,}
		\end{cases}
	\end{align*}
  \end{minipage}
  \ 
  \begin{minipage}{0.45\textwidth}
	\begin{align*}
		h_2=&g_{[1/8, 1/4]}+\sqrt{5}g_{[1/4,3/8]},\\
	h_3=&g_{[5/8, 3/4]}+\sqrt{5}g_{[3/4,7/8]},\\
	h_4=&g_{[9/64,5/32]}+\sqrt{23}g_{[5/32,11/64]},\\
	h_5=&g_{[13/64,7/32]}+\sqrt{23}g_{[7/32,15/64]},\\
	h_6=&g_{[41/64,21/32]}+\sqrt{23}g_{[21/32,43/64]},\\
	h_7=&g_{[45/64,23/32]}+\sqrt{23}g_{[23/32,47/64]}.
	\end{align*}
  \end{minipage}

\bigskip
Note that functions of the same level can be transformed in one another by shifting the argument. By the support of a function we understand the set of points at which the function is not equal to zero. The supports of the functions $h_i$, $i=2,\dots,7$, belong to the segment $[1/8,1]$ on which the function $h_0$ equals $1$. We illustrate here the graphs of the functions $h_0$, $h_1$, $h_2$, $h_3$ for clarity.
\bigskip

\hskip 3pt
\begin{tikzpicture}[scale=0.5] 
	\begin{axis}[
		xticklabel style={/pgf/number format/frac, /pgf/number format/frac shift=2},
		xtick ={-1, 0, 1/8, 1},
		ytick ={0,1},
	]
		\addplot[color=black, domain=-1:0, samples=300]{0};
		\addplot[color=black, domain=0:{1/16}, samples=300]{sqrt(24*x)};
		\addplot[color=black, domain={1/16}:{1/8}, samples=300]{sqrt((-8)*x+2)};
		\addplot[color=black, domain={1/8}:{1}, samples=300]{1};
		\addlegendentry{$h_0$}
	\end{axis}
\end{tikzpicture}
\begin{tikzpicture}[scale=0.5] 
	\begin{axis}[
		xticklabel style={/pgf/number format/frac, /pgf/number format/frac shift=1},
		yticklabel style={/pgf/number format/frac, /pgf/number format/frac shift=1},
		xtick ={-1, 0, 1/8, 1/4, 1/2, 1},
		ytick ={-1,0,1/2,1},
	]
		\addplot[color=black, dashed] 
		coordinates {
		(-1,1/2)
		(0,0)
		(1/8, 1)
		(3/8,1)
		(5/8,-1)
		(7/8,-1)
		(1,0)
		};
		\addlegendentry{$h_1$}
		\addplot[color=black]
		coordinates {
		(-1,0)
		(1/8,0)
		(9/64, 1)
		(11/64,1)
		(13/64,-1)
		(15/64,-1)
		(4/16,0)
		(9/64+1/8, 2.236)
		(11/64+1/8, 2.236)
		(13/64+1/8,-2.236)
		(15/64+1/8, -2.236)
		(16/64+1/8, 0)
		(1,0)
		};
		\addlegendentry{$h_2$}
	\end{axis}
\end{tikzpicture}
\hskip 0pt
\begin{tikzpicture}[scale=0.5]
	\begin{axis}[
		xticklabel style={/pgf/number format/frac, /pgf/number format/frac shift=1},
		yticklabel style={/pgf/number format/frac, /pgf/number format/frac shift=1},
		xtick ={-1, 0, 1/2, 5/8, 3/4,1},
		ytick ={-1,0,1/2,1},
		legend pos=north west,
	]
		\addplot[color=black, dashed] 
		coordinates {
		(-1,1/2)
		(0,0)
		(1/8, 1)
		(3/8,1)
		(5/8,-1)
		(7/8,-1)
		(1,0)
		};
		\addlegendentry{$h_1$}
		\addplot[color=black]
		coordinates {
		(-1,0)
		(1/8+1/2,0)
		(9/64+1/2, 1)
		(11/64+1/2,1)
		(13/64+1/2,-1)
		(15/64+1/2,-1)
		(12/16,0)
		(9/64+5/8, 2.236)
		(11/64+5/8, 2.236)
	    (13/64+5/8,-2.236)
		(15/64+5/8, -2.236)
		(16/64+5/8, 0)
		(1,0)
		};
		\addlegendentry {$h_3$}
	\end{axis}
\end{tikzpicture}

We also present the graphs of the functions
$h_2$, $h_4$, and $h_5$ on the segment $[0,1/2]$. The functions $h_3$, $h_6$, and $h_7$ have the same structure on the segment $[1/2,1]$. 

\begin{tikzpicture}[scale=0.5]
	\begin{axis}[
		xmin=0,
		xmax=1/2,
		xticklabel style={/pgf/number format/frac, /pgf/number format/frac shift=1},
		xtick ={0,1/8,1/4,1/2},
		ytick ={-1,0,1},
	]

		\addplot[color=black, dashed]
		coordinates {
		(-1,0)
		(1/8,0)
		(9/64, 1)
		(11/64,1)
		(13/64,-1)
		(15/64,-1)
		(4/16,0)
		(9/64+1/8, 2.236)
		(11/64+1/8, 2.236)
		(13/64+1/8,-2.236)
		(15/64+1/8, -2.236)
		(16/64+1/8, 0)
		(1,0)
		};
		\addlegendentry{$h_2$}

		\addplot[color=black]
		coordinates {
		(0,0)
		(9/64,0)
		(9/64+1/512,1)
		(9/64+3/512,1)
		(9/64+5/512,-1)
		(9/64+7/512,-1)
		(9/64+8/512,0)
		(5/32,0)
		(5/32+1/512, 4.7958)
		(5/32+3/512, 4.7958)
		(5/32+5/512,-4.7958)
		(5/32+7/512,-4.7958)
		(5/32+8/512,0)
		(1/2,0)
		};
		\addlegendentry{$h_4$}
	\end{axis}
\end{tikzpicture}
\begin{tikzpicture}[scale=0.5]
	\begin{axis}[
		xmin=0,
		xmax=1/2,
		xticklabel style={/pgf/number format/frac, /pgf/number format/frac shift=1},
		xtick ={0,1/8,1/4,1/2},
		ytick ={-1,0,1},
	]
		\addplot[color=black, dashed]
		coordinates {
		(-1,0)
		(1/8,0)
		(9/64, 1)
		(11/64,1)
		(13/64,-1)
		(15/64,-1)
		(4/16,0)
		(9/64+1/8, 2.236)
		(11/64+1/8, 2.236)
		(13/64+1/8,-2.236)
		(15/64+1/8, -2.236)
		(16/64+1/8, 0)
		(1,0)
		};
		\addlegendentry{$h_2$}
		\addplot[color=black]
		coordinates {
		(0,0)
		(13/64,0)
		(13/64+1/512,1)
		(13/64+3/512,1)
		(13/64+5/512,-1)
		(13/64+7/512,-1)
		(13/64+8/512,0)
		(7/32,0)
		(7/32+1/512, 4.7958)
		(7/32+3/512, 4.7958)
		(7/32+5/512,-4.7958)
		(7/32+7/512,-4.7958)
		(7/32+8/512,0)
		(1/2,0)
		};
		\addlegendentry{$h_5$}
	\end{axis}
\end{tikzpicture}

\begin{Example}[Example 2]\label{ex2} The subspace $X_8=\langle h_0, h_1, \dots, h_7\rangle\subset{L_2(\Omega)}$ has the following properties:
	\begin{itemize}
		\item [a)] $m(X_8, w)=9$,
		\item [b)] $X_8\in \mathcal{M}^w(9,2,0) \backslash \mathcal{M}_+^w(9,2,0)$.
	\end{itemize}
\end{Example}
\begin{proof}
For the functions $h_1,\dots, h_7$, the following two properties are fulfilled: the supports of the functions of the same level do not intersect and any function of a lower level is constant on the support of any function of a higher level. The orthogonality of the functions $h_0, h_1,\dots, h_7$ is easily derived from this. Therefore the following two conditions are nessessary and sufficient for \eqref{int_eq_k} to hold with some $k\in\mathbb{N}$ and for any $f\in X_8$:
\begin{equation}\label{for_h}
	\int\limits_{\Omega}h_i^2(x) d\mu=\sum\limits_{j=1}^k\lambda_jh_i^2(\xi^j),
\end{equation}
for $h_i$, $i=0,\dots, 7$,
\begin{equation}\label{for_pair}
	\sum\limits_{j=1}^k\lambda_jh_i(\xi^j)h_s(\xi^j)=0,
\end{equation}
for $h_i$ and $h_s$, $i\neq s$,  $i, s\in \{0,\dots, 7\}$.

Suppose that \eqref{int_eq_k} holds for some $k\in \mathbb{N}$ and  $\lambda_j\neq 0$, $j=1,\dots,k$.
Since the continuous functions $h_4, h_5, h_6, h_7$ are nonzero and  \eqref{for_h} holds for them, we see that there is at least one point from the set $\{\xi^j\}_{j=1}^k$ on the support of each of these functions. Note that if for some number $s\in\{4,5,6,7\}$ there is exactly one point on the support of the function $h_s$, then \eqref{for_pair} does not hold for $h_0$ and $h_s$. Therefore there are at least two points from the set $\{\xi^j\}_{j=1}^k$ on the support of each of the functions $h_4, h_5, h_6, h_7$. Since these supports do not intersect, we obtain  $m(X_8, w)\geq 8$.

Assume that $m(X_8, w)=8$. We established in the previous paragraph that in this case the points $\xi^j$, $j=1,\dots, 8$, lie on the positive semi-axis and that $h_0(\xi^j)=1$, $|h_1(\xi^j)|=1$ for $j=1,\dots, 8$.  

From the definition of the function $h_0$ and from equality \eqref{for_h} for $h_0$, we derive:
\begin{equation*}
	\sum\limits_{j=1}^8 \lambda_j=\int\limits_{\Omega}h_0^2(x) d\mu=24\int\limits_{0}^{1/16}x d\mu + \int\limits_{1/16}^{1/8}(-8x+2) d\mu+\frac{7}{8}=1.
\end{equation*}
At the same time 
\begin{equation*}
	\int\limits_{0}^{1}g_{[0,1]}^2(x) d\mu=
	64\int\limits_{0}^{1/8}x^2 d\mu+\frac{1}{4}+64\int\limits_{3/8}^{5/8}\biggl(-x+\frac{1}{2}\biggr)^2d\mu+\frac{1}{4}+64\int\limits_{7/8}^{1}(x-1)^2 d\mu=\frac{2}{3}
\end{equation*}
and so
\begin{gather}\label{int_h_1}
	\int\limits_{\Omega}h_1^2(x) d\mu=\int\limits_{-1}^{0}\frac{x^2}{4} d\mu+\int\limits_{0}^{1}g_{[0,1]}^2(x) d\mu=\frac{1}{12}+\frac{2}{3}=\frac{3}{4}.
\end{gather}
Hence on the left--hand side of equality \eqref{for_h} for $h_1$ we have $3/4$ but there is $1$ on the right--hand side. We came to a contradiction. Thus, $m(X_8, w)>8$.

To prove that $X_8\in \mathcal{M}^w(9,2,0)$,  let $k=9$, $\lambda_1=-4$, $\lambda_i=1/8$, $i=2,\dots,9$, $\xi^1=-1/2$, and $\xi^2,\dots,\xi^9$ be the midpoints of the segments on which the functions $h_4$, $h_5$, $h_6$, and $h_7$ are equal to $\pm 1$ (that is,  $\xi^2=37/256$, $\xi^3=39/256$, $\xi^4=53/256$, $\xi^5=55/256$, $\xi^6=165/256$, $\xi^7=167/256$, $\xi^8=181/256$, $\xi^9=183/256$),  and check \eqref{for_h} and \eqref{for_pair}. By the construction of the functions $h_0, h_1, \dots, h_7$ and from $\lambda_i=\lambda_j$, $i,j\in\{2,\dots, 9\}$, we obtain that it is sufficient to check \eqref{for_h} for $h_0$, $h_1$, $h_2$, and $h_4$, which can be done straightforwardly. The remaining equalities are obviously true. Part a) is proved.

Turn to part b). Assume that the points $\xi^j$  and the positive weights $\lambda_j$ are such that \eqref{int_eq_k} holds with $k=9$. It follows from the proof of a) that at least eight points lie on the segment $[1/8,1]$, namely on that part of this segment, where $h_1$ equals $\pm 1$. If at the same time the ninth point belongs to $[1/8, 3/8]\cup[5/8,7/8]$, then \eqref{for_h} for $h_0$ implies that the sum of the weights $\lambda_j$, $j=1,\dots, 9$, is $1$. This contradicts equality \eqref{for_h} for $h_1$ (see the proof of a)). Thus we derive that there are four points among $\xi^j$, $j=1,\dots, 9$, on the segment $[1/8, 3/8]$ (to be definite, $\xi^2$,  $\xi^3$, $\xi^4$, $\xi^5$); two of them on the segment $[9/64, 11/64]$ and two others on  $[13/64, 15/64]$. 
Equality \eqref{for_h} for $h_2$ takes the form:
\begin{gather}\label{2_5}
 \frac{2}{3}\cdot \frac{1}{8}+5\cdot \frac{2}{3}\cdot \frac{1}{8}=\frac{1}{2}= \lambda_2+\lambda_3+\lambda_4+\lambda_5.
\end{gather}

Similarly, there are four points among $\xi^j$, $j=1,\dots, 9$, on the segment $[5/8, 7/8]$. To be definite, $\xi^6, \xi^7, \xi^8, \xi^9$. Then it follows from \eqref{for_h} for $h_3$ that
\begin{gather}\label{6_9}
\frac{1}{2}= \lambda_6+\lambda_7+\lambda_8+\lambda_9.
\end{gather}
Equalities \eqref{int_h_1}, \eqref{2_5}, and \eqref{6_9} reduce \eqref{for_h} for $h_1$ to the form:
$
3/4=1+\lambda_1 h_1^2(\xi^1).
$
So $\lambda_1<0$ and b) is proved.
\end{proof}

\begin{Remark}
One can show that $X_8\in \mathcal{M}_+^w(11,2,0)\backslash \mathcal{M}_+^w(10,2,0)$.
\end{Remark}
Actually, the following theorem holds:
\begin{Oldtheorem}[{\cite[Corollary 4.2]{DPTT}}]
	Let $\Omega$ be a sequentially compact topological space with the probability Borel measure $\mu$ and let $X_N$ be an $N$--dimensional real linear subspace of $L_1(\Omega, \mu)\cap C(\Omega)$.  Then $X_N\in \mathcal{M}^w(N(N+1)/2,2,0)$. 
\end{Oldtheorem}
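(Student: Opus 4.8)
The plan is to reduce the assertion to a statement of finite-dimensional linear algebra. Write $X_N=\langle f_1,\dots,f_N\rangle$ and set $Y:=\langle f_if_k:1\le i\le k\le N\rangle$; then $\dim Y=:n\le N(N+1)/2$, and for every $f=\sum_i c_if_i\in X_N$ the square $f^2=\sum_{i,k}c_ic_k\,f_if_k$ lies in $Y$. I would first check that the hypotheses make the functional $g\mapsto\int_\Omega g\,d\mu$ finite (hence well defined) on $Y$: since $\Omega$ is sequentially compact and the $f_i$ are continuous, each $f_i$ is bounded on $\Omega$ (if some continuous $f_i$ were unbounded, a sequence $x_k$ with $|f_i(x_k)|\to\infty$ would have a convergent subsequence $x_{k_l}\to x$, contradicting continuity of $f_i$ at $x$). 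Therefore every product $f_if_k$, and thus every element of $Y$, belongs to $L_\infty(\Omega,\mu)\subseteq L_1(\Omega,\mu)$.

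Next I would fix a basis $g_1,\dots,g_n$ of $Y$ and consider the map $\Phi\colon\Omega\to\mathbb{R}^n$, $\Phi(x)=(g_1(x),\dots,g_n(x))$. Linear independence of $g_1,\dots,g_n$ as functions on $\Omega$ means exactly that no nonzero $a\in\mathbb{R}^n$ annihilates all the vectors $\Phi(x)$, $x\in\Omega$; equivalently, the family $\{\Phi(x):x\in\Omega\}$ spans $\mathbb{R}^n$. Extracting a basis from this spanning family, I obtain points $\xi^1,\dots,\xi^n\in\Omega$ such that $\Phi(\xi^1),\dots,\Phi(\xi^n)$ form a basis of $\mathbb{R}^n$. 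In particular the ``moment vector'' $v:=\bigl(\int_\Omega g_1\,d\mu,\dots,\int_\Omega g_n\,d\mu\bigr)\in\mathbb{R}^n$ admits a (unique) representation $v=\sum_{j=1}^n\lambda_j\Phi(\xi^j)$ with real coefficients $\lambda_1,\dots,\lambda_n$, not required to be positive.

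It then remains to verify that these nodes and weights realize the exact discretization on $X_N$. Given $f\in X_N$, expand $f^2=\sum_{i=1}^n\alpha_ig_i$ in the chosen basis of $Y$; then
\[
\int_\Omega f^2\,d\mu=\sum_{i=1}^n\alpha_i\int_\Omega g_i\,d\mu=\sum_{i=1}^n\alpha_i v_i=\sum_{i=1}^n\alpha_i\sum_{j=1}^n\lambda_j g_i(\xi^j)=\sum_{j=1}^n\lambda_j\sum_{i=1}^n\alpha_i g_i(\xi^j)=\sum_{j=1}^n\lambda_j f^2(\xi^j),
\]
which is exactly \eqref{discr} with $m=n$. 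Appending $N(N+1)/2-n$ further nodes carrying weight $0$ yields $X_N\in\mathcal{M}^w(N(N+1)/2,2,0)$, as claimed.

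Everything after the initial reduction is routine: the decisive combinatorial fact is just ``a spanning set contains a basis'', applied to $\Phi(\Omega)\subset\mathbb{R}^n$ (the linear counterpart of the Carath\'eodory argument one would use if positivity of the $\lambda_j$ were wanted). Consequently I expect the only genuinely load-bearing step to be the integrability observation at the very start: the hypothesis $X_N\subset L_1(\Omega,\mu)\cap C(\Omega)$ by itself does \emph{not} force $f^2\in L_1(\Omega,\mu)$, and it is precisely sequential compactness that repairs this by rendering every $f\in X_N$ bounded, so that the integrals defining $v$ make sense and the left-hand side of \eqref{discr} is finite.
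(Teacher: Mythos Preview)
The paper does not supply its own proof of this statement: Theorem~A is simply quoted from \cite[Corollary~4.2]{DPTT} (and the same fact is invoked earlier with a reference to \cite[Theorem~3.1]{DPTT}), so there is nothing in the present paper to compare your argument against.

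That said, your proposal is correct and is essentially the standard linear-algebra argument one expects here. The reduction to the span $Y$ of the products $f_if_k$, the observation that $\{\Phi(x):x\in\Omega\}$ spans $\mathbb{R}^n$ precisely because the $g_i$ are linearly independent as functions, and the extraction of a basis to represent the moment vector are all sound; the final padding by zero-weight nodes is harmless since $\mathcal{M}^w$ does not forbid zero weights. Your remark that sequential compactness is what makes the integrals $\int g_i\,d\mu$ finite (via boundedness of continuous functions) is exactly the role that hypothesis plays, and your parenthetical contrast with the Carath\'eodory argument needed for $\mathcal{M}^w_+$ is apt. If you want to sharpen the write-up, you could note that the argument actually yields the stronger conclusion $m(X_N,w)\le\dim Y\le N(N+1)/2$.
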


For $X_N$, we define 
$
m_+(X_N, w):=\min\{m: X_N\in\mathcal{M}_+^w(m,2,0)\}$.
It is still an open question, how much the values $m(X_N, w)$ and $m_+(X_N, w)$ can differ.

{\bf Acknowledgements.} The author is grateful to V.N. Temlyakov for posing the problem at the seminar of the Laboratory ``Multidimensional Approximation and Applications'' and to K.A. Oganesyan for careful reading and useful comments.

This research was supported by the Russian Science Foundation (project no. 21-11-00131) at Lomonosov Moscow State University.


\begin{thebibliography}{99}
\bibitem{DPTT} F. Dai, A. Prymak, V.N. Temlyakov, and  S. Tikhonov, Integral norm discretization and related problems, {\it Russian Math. Surveys} {\bf 74}:4(448) (2019),   579--630.
 Translation from
{\it Uspekhi Mat. Nauk}  {\bf 74}:4(448)  (2019),	3--58; arXiv:1807.01353v1.
\end{thebibliography}
\end{document}